\newtheorem{proposition}{Proposition}
\newtheorem{remark}{Remark}
\newtheorem{definition}{Definition}
\newtheorem{theorem}{Theorem}
\newtheorem{example}{Example}
\newtheorem{corollary}{Corollary}
\renewcommand*{\backref}[1]{}
\renewcommand*{\backrefalt}[4]{%
    \ifcase #1 (Not cited.)%
    \or        (Cited on page~#2.)%
    \else      (Cited on pages~#2.)%
    \fi}
\newcommand{\C}{\mathbf{C}}
\newcommand{\B}{\mathbf{B}}
\newcommand{\N}{\mathbb{N}}
\newcommand{\A}{\mathcal{A}}
\newcommand{\Grp}{\mathbf{Grp}}
\newcommand{\Mon}{\mathbf{Mon}}
\newcommand{\Ord}{\mathbf{Ord}}
\newcommand{\RNMono}{\mathbf{RNMono}}
\newcommand{\Mono}{\mathbf{Mono}}
\newcommand{\Set}{\mathbf{Set}}
\renewcommand{\S}{\mathcal{S}}
\newcommand{\OrdMon}{\mathbf{OrdMon}}
\newcommand{\End}{\mathrm{End}}
\begin{document}

\title[Schreier split extensions of preordered monoids]{Schreier split extensions of preordered monoids}

%\titlerunning{Short form of title}        % if too long for running head

\author{Nelson Martins-Ferreira}
\address[Nelson Martins-Ferreira]{Instituto Polit\'{e}cnico de Leiria, Leiria, Portugal}
\thanks{This work is supported by the Funda\c{c}\~{a}o para a Ci\^{e}ncia e a Tecnologia (FCT) and Centro2020 through the following Projects: UIDB/04044/2020, UIDP/04044/2020, PAMI - ROTEIRO/0328/2013 (Nº 022158), Next.parts (17963); and MATIS (CENTRO-01-0145-FEDER-000014 - 3362).
 }
\email{martins.ferreira@ipleiria.pt}

\author{Manuela Sobral}
\address[Manuela Sobral]{CMUC and Departamento de
Matem\'atica, Universidade de Coimbra, 3001--501 Coimbra,
Portugal}
\thanks{
The second author was partially supported by the Centre for Mathematics of
the University of Coimbra -- UID/MAT/00324/2019}
\email{sobral@mat.uc.pt}

\vspace*{1mm}

\dedicatory{Dedicated to J. M. Esgalhado Valen\c{c}a on the occasion of his 70th birthday}

\begin{abstract}
Properties of preordered monoids are investigated and important subclasses of such structures are studied. The corresponding full subcategories of the category of preordered monoids are functorially related between them as well as with the categories of preordered sets and monoids.
Schreier split extensions are described in the full subcategory of preordered monoids whose preorder is determined by the corresponding positive cone.

\keywords{ordered monoid, preordered monoid, positive cone, normal monomorphism, extension}

% ams classification
% 18G50, 20M10, 20M32

\end{abstract}

\maketitle

\date{Received: date / Accepted: date}
% The correct dates will be entered by the editor

%\today
\today.

\section{Introduction}

Preordered monoids are monoids equipped with a preorder compatible with the monoid operation. They are relevant tools in many areas as, for instance, in computer science where they are used in the theory of language recognition (see \cite{RS97}),  as well as in non-classical logics, namely in fuzzy logics (see \cite{H98} and \cite{JV19}), to mention a few.

\vspace*{1mm}

Many fundamental results had been obtained by switching from categories of monoids to categories of preordered or ordered monoids, and the same for semigroups. Examples of this fact are new proofs of two remarkable results that we refer next.

A celebrated result of I. Simon (\cite{S75}) on the classification of recognizable languages in terms of $\mathcal{J}$-triviality of the corresponding syntactic monoids has a radically new proof in \cite{ST85} where it is proved that every finite
 $\mathcal{J}$-trivial monoid (for the Green's $\mathcal{J}$-equivalence relation \cite{Green}) is a quotient of an ordered monoid satisfying the identity $x \leq 1$.
In \cite{HP99}, the authors give another proof of this result and explain its relevance in the theory of finite semigroups. A
systematic use of ordered monoids in language theory, was initiated by J.-E. Pin in \cite{Pin95} and developed in \cite{PW97}, \cite{PW02} and other subsequent papers.

The second example is a new proof of a well-known and important result of A. Tarski that gives a criterion for the existence of a monoid homomorphism from a given commutative monoid $A$ to the extended positive real line $\overline {\mathbb{ R}^+}$ that sends a fixed element $a \in A$ to the 1.
  In \cite{FW92}, F. Wehrung proves that this is an Hahn-Banach type property,
stating the injectivity of $\overline {\mathbb{R}^+}$, not in the category of commutative monoids, where there are no nontrival injectives, but in the category of commutative monoids equipped with a preorder that makes every element positive,  called there ``positively ordered monoids" or P.O.M. for short.

\vspace*{1mm}

Preordered monoids have a much richer diversity of features than preordered groups.
In contrast with the case of preordered groups, in preordered monoids the submonoid of positive elements, called the positive cone, neither determines the preorder nor  is  a cancellative monoid, in general. These features of preordered groups are rescued in the new context by considering
convenient subcategories of the category of preordered monoids, $\Ord\Mon$, satisfying these properties or appropriate generalizations, covering a wide
range of structures.

In particular, the failure of the first property gives rise to a classification of preordered monoids according to the relation between its preorder and the preorder induced by the corresponding positive cone considered here that is the opposite of  Green's preorder $\mathcal{L}$
 as explained in Section \ref{sec:2}. Furthermore, this last preorder may or may not be compatible with the monoid operation. The characterization of the positive cones inducing compatible preorders provides a reason why the commutativity of the underlying monoid is often assumed in the literature.

This classification gives rise to several categories and functors between them, some of them being part of adjoint situations.

The cancellation property is often replaced by weaker conditions like the ``pseudo-cancellation" introduced in \cite{FW92} that plays an important role in the characterization of the injective objects presented there.

\vspace*{1mm}

We prove that the forgetful functors from $\OrdMon$ to $\Mon$ and to $\Ord$ are topological and monadic functors, respectively, and derive some consequences of these facts. By $\Ord$ we mean the category of preordered sets and monotone maps.

Due to the fact that $\OrdMon$ is the category $\Mon(\Ord)$ of internal monoids in $\Ord$ (which fails to be so in $\Ord\Grp$), we show that the construction of the left adjoint to $U_1\colon{\Ord\Mon=\Mon(\Ord)\to \Ord}$ as well as its monadicity can be derived from general results for the forgetful functor $\Mon(\C)\to \C$, when $\C$ is a symmetric monoidal category satisfying some additional conditions, presented in \cite{K80}, \cite{L10} and \cite{P08}.

\vspace*{1mm}

In \cite{JV19} coextensions of commutative pomonoids (monoids equipped with a compatible partial order) are introduced, generalizing similar constructions due to P. A. Grillet (\cite{G74}) and J. Leech (\cite{L82,L75}), in the unordered case.

Schreier split extensions of monoids, that first appeared in \cite{MMS13}, correspond to an important class of split epimorphisms of monoids, the Schreier split epimorphisms (whose name was inspired by the Schreier internal categories in monoids introduced by Patchkoria in \cite{P98}). Indeed, they are exactly those split epimorphisms that correspond to monoid actions: an action of a monoid $B$ on a monoid $X$ being a monoid homomorphism $\varphi\colon{B\to\End(X)}$ from $B$ to the monoid of endomorphisms of $X$. Also this class of split epimorphisms has essentially all homological and algebraic properties of the split homomorphisms in groups (see \cite{BMMS13} and \cite{BMMS14}).

Schreier split extensions have already been defined in categories of monoids with operations (\cite{MMS13}) and in the categories of cancellative conjugation monoids (\cite{GMRS19}).

\vspace{1mm}

In this paper we describe Schreier split extensions in the full subcategory $\Ord\Mon^{*}$ of $\Ord\Mon$ with objects all preordered monoids whose preorder is induced by the corresponding positive cones.

\vspace{1mm}

 In \cite{CMM19} the structure of the split extensions in the category of preordered groups is studied and the case where the restriction to the positive cones gives a Schreier split epimorphism in $\Mon$ is analysed. Also the behaviour of the category $\Mon(\Ord)$ and, more generally, the one  $\Mon(\C)$ when $\C$ satisfies suitable conditions, is considered in the last section.

 \vspace*{1mm}

Throughout we will denote preordered monoids additively, say by $(A,+,0,\leq)$ where the monoid $(A,+,0)$ is not necessarily commutative and $\leq$ is a preorder compatible with $+$, that is, where $+\colon{A\times A\to A}$ is a monotone map.

For concepts in category theory that are not defined here we suggest MacLane's book \cite{MacLane}.

\section{The Category of preordered monoids}\label{sec:2}

We start by recalling that
if $(A,+,0,\leq)$ is a preordered group, i.e. $(A,+,0)$ is a (not necessarily abelian) group and the preorder $\leq$ is compatible with the group operation
\begin{equation*}
\forall a,b,c,d\in A\quad a\leq b \quad\text{and}\quad c\leq d \quad \Longrightarrow \quad a+c\leq b+d,
\end{equation*}
then $P=\{a\in A\mid 0\leq a\}$ is a submonoid of $A$  closed under conjugation. Furthermore, this monoid $P$, that is called the positive cone of the preordered group, determines the preorder, i.e.,
\begin{equation*}
a\leq b  \Longleftrightarrow b-a\in P.
\end{equation*}
 Indeed, if $a\leq b$, since $-a\leq -a$, then $$0=a-a\leq b-a.$$
 Conversely, if $b-a\geq 0$, since $a\geq a$ then $$b=b-a+a\geq a.$$
In this case, defining $$a\leq_{P} b \Longleftrightarrow b\in P+a=a+P$$
we have that $\leq$ coincides with $\leq_P$.

\vspace*{.5cm}

In $\Ord\Mon$, if we consider the preorder $\leq_P$ defined by
\[ a\leq_P b \quad\text{if}\quad b\in P+a,\] then we get a preorder $\leq_P$ that is contained in the original preorder.

\begin{proposition}
If $(A,+,0,\leq)\in \Ord\Mon$ then $P=\{a\in A\mid 0\leq a\}$ is a submonoid of $A$ and
\[a\leq_{P} b  \Longrightarrow a\leq b.\]
\end{proposition}
 \begin{proof}
 We have that $0\in P$ and if $a,b\in P$ then $a\geq 0$ and $b\geq 0$ implies that $a+b\geq 0$ and so $P$ is a submonoid of $A$.

 If $b=x+a$ with $x\in P$, since $x\geq 0$ and $a\geq a$, then $b=x+a\geq a$.
 \end{proof}

The converse of this result is false, in general, as the following example shows.

\begin{example}\label{eg: positive cone compatible but not determinant}
Let $(A,+,0)$ be the monoid with the following addition table
\[
\begin{tabular}{c|ccccc}
+ & 0 & 1 & 2 & 3 & 4 \\
\hline
0 & 0 & 1 & 2 & 3 & 4 \\
1 & 1 & 1 & 4 & 4 & 4 \\
2 & 2 & 2 & 4 & 4 & 4 \\
3 & 3 & 3 & 4 & 4 & 4 \\
4 & 4 & 4 & 4 & 4 & 4	
\end{tabular}
\]
equipped with the preorder $\leq$ with $P=A$ and generated by the following diagram (where the arrows from zero have been omitted)
\[
\xymatrix{1\ar[r]\ar[d]\ar[rd]& 2 \ar[d]\ar[dl]\\ 3 \ar[r] & 4}.
\]
Then $(A,+,0,\leq)\in \Ord\Mon$ and $\leq_P$ is the preorder
\[
\xymatrix{1\ar[r]\ar[d]\ar[rd]& 2 \ar[d]\\ 3 \ar[r] & 4},
\]
 that is strictly contained in $\leq$.
\end{example}

 In the previous example one can easily check that $\leq_P$ is compatible with $+$ and so  $(A,+,0,\leq_P)$ is also a preordered monoid.
The following example shows that this is not always the case. %Indeed, $\leq_P$ may not be compatible with the monoid operation.

\begin{example}\label{eg: positive cone not compatible}
We consider the monoid $(A,+,0)$ with addition table
\[
\begin{tabular}{c|ccccc}
+ & 0 & 1 & 2 & 3 & 4 \\
\hline
0 & 0 & 1 & 2 & 3 & 4 \\
1 & 1 & 1 & 2 & 2 & 4 \\
2 & 2 & 1 & 2 & 1 & 4 \\
3 & 3 & 1 & 2 & 1 & 4 \\
4 & 4 & 4 & 4 & 4 & 4
\end{tabular}
\]
with $P=A$ and the preorder generated by
\[
\xymatrix{1\ar[r]\ar[rd]& 2 \ar[d]\\ 3\ar[u]\ar[ur] \ar[r] & 4}.
\]
It is easy to check that $(A,+,0,\leq)$ is a preordered monoid. However,  $\leq_P$ being the following preorder
\[
\xymatrix{1\ar[rd]& 2 \ar[d]\\ 3\ar[u]\ar[ur] \ar[r] & 4}
\]
is not compatible with the monoid operation. Indeed, $2\geq_P 2$ and $1\geq_P 1$ but $1+2=2\ngeq_P 1$ since $2\notin A+1=\{1,4\}$.
\end{example}

The following is an example of a preordered monoid where the two preorders coincide.

\begin{example}\label{eg: a different positive cone}
Let $(A,+,0)$ be the monoid of Example \ref{eg: positive cone compatible but not determinant} now with a different positive cone, $P=\{0,1\}$, and the preorder sketched below
\[\xymatrix{0\ar[r]&1&2\ar[d]\\& 3\ar[r]&4}\]
which is exactly $\leq_P$, i.e. $\leq$ is the same as $\leq_P$.
\end{example}

Now we characterize the submonoids of a preordered monoid which induce a compatible preorder.

\begin{definition}\label{def: normal submonoid}
Given a monoid $A$ and a submonoid $M$ of $A$ we say that $M$ is
\begin{enumerate}
\item[-] \emph{right normal} if $a+M\subseteq M+a$, for every $a\in A$;
\item[-] \emph{left normal} if $M+a\subseteq a+M$, for every $a\in A$;
\item[-] \emph{normal} if it is both right and left normal.
\end{enumerate}
\end{definition}

\begin{proposition}\label{prop: iff P is right normal}
Let $P$ be the positive cone of a preordered monoid $(A,+,0,\leq)$. Then the monoid operation is monotone with respect to $\leq_P$ if and only if $P$ is right normal.
\end{proposition}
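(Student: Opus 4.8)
The plan is to split the joint monotonicity condition into two separate one-variable conditions, observe that one of them is automatic, and thereby locate the entire content of the equivalence in the remaining condition, which will match right normality exactly.

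First I would record that, since $\leq_P$ is a preorder, the operation $+$ is monotone for the product preorder if and only if it is monotone in each variable separately: joint monotonicity gives each separate monotonicity by taking the other comparison to be reflexive, and conversely, from $a\leq_P b$ and $c\leq_P d$ one obtains $a+c\leq_P b+c\leq_P b+d$ by transitivity. Next I would dispose of monotonicity in the left variable, i.e. the implication $a\leq_P b\Rightarrow a+c\leq_P b+c$: if $b=x+a$ with $x\in P$, then associativity gives $b+c=x+(a+c)$, so $a+c\leq_P b+c$ with no hypothesis on $P$ whatsoever. Thus the proposition reduces to showing that right normality of $P$ is equivalent to monotonicity in the right variable, namely $c\leq_P d\Rightarrow a+c\leq_P a+d$ for all $a$.

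For the implication from right normality, I would start from $c\leq_P d$, write $d=x+c$ with $x\in P$, and compute $a+d=a+x+c$. Right normality gives $a+x\in a+P\subseteq P+a$, so $a+x=y+a$ for some $y\in P$, whence $a+d=y+(a+c)$ and therefore $a+c\leq_P a+d$. For the converse, I would feed the operation the single comparison $0\leq_P x$, which holds for every $x\in P$ because $x\in P+0=P$ (using the Proposition stating that $P$ is a submonoid). Monotonicity in the right variable then yields $a=a+0\leq_P a+x$, i.e. $a+x\in P+a$; letting $a$ and $x\in P$ range freely gives $a+P\subseteq P+a$, which is precisely right normality.

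The step I would flag as the crux is not a computation but the structural observation that monotonicity is automatic on one side: because $\leq_P$ places the positive witness on the left ($b=x+a$), post-composing by $c$ preserves this shape for free, while pre-composing by $a$ forces the comparison $a+x=y+a$. It is exactly this commutation-up-to-$P$ that is encoded by $a+P\subseteq P+a$, so the asymmetry between left and right normality in Definition \ref{def: normal submonoid} is what makes the equivalence land on right normality rather than its mirror image.
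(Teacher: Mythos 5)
Your proof is correct, and its two key computations are the same as the paper's: for ``monotone $\Rightarrow$ right normal'' you feed the single comparison $0\leq_P x$ into the operation (the paper uses $x\geq_P 0$ together with $a\geq_P a$, which is the identical move), and for the converse you use right normality to rewrite $a+x$ as $y+a$ with $y\in P$. What differs is the organization. You first split joint monotonicity into the two one-variable conditions — legitimate, since $\leq_P$ is reflexive and transitive, $P$ being a submonoid — then observe that monotonicity in the left variable ($a\leq_P b\Rightarrow a+c\leq_P b+c$) is automatic because the positive witness stays on the left, and so reduce the proposition to an equivalence between right normality and monotonicity in the right variable alone. The paper instead proves the converse in a single two-variable computation: from $b=x+a$, $d=y+c$, and $a+y=z+a$ it gets $b+d=x+z+a+c$; read closely, this fuses your two one-variable steps ($x$ passes through untouched, which is your ``free'' half, while $z$ is where right normality acts). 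Your route costs one routine extra lemma but buys a sharper picture: it isolates that only one of the two one-variable monotonicities carries any content, which is precisely why the characterization lands on right normality rather than its mirror image — the structural point you flag at the end, and one the paper leaves implicit.
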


\begin{proof}
If $\leq_P$ is compatible with $+$ and $b=a+x$ with $x\in P$ then
\[x\geq_P 0\quad\text{and}\quad a\geq_P a\quad \Longrightarrow\quad b=a+x\geq_P a\] and so there exists an $y\in P$ such that $a+x=y+a$, i.e. $a+P\subseteq P+a$.

Conversely, if $a\leq_P b$ and $c\leq_P d$ then $b=x+a$ and $d=y+c$, for some $x,y\in P$ and so, because $P$ is right normal, we can find $z\in P$ for which $a+y=z+a$, hence
\[b+d=x+a+y+c=x+z+a+c\] and so $a+c\leq_P b+d$.
\end{proof}

In Example \ref{eg: positive cone compatible but not determinant} we have $P=A$, the so-called positively preordered monoids, and the left and right cosets are the following
\[
\begin{tabular}{c|c|c}
a & a+A & A+a \\
\hline
 0 & A & A \\
1 & \{1,4\} & \{1,2,3,4\} \\
2 & \{2,4\} & \{2,4\} \\
3 & \{3,4\} & \{3,4\} \\
 4 & \{4\} & \{4\}
\end{tabular}
\]
Since $P$ is right normal --- for all $a\in A$, $a+A\subseteq A+a$ --- then $\leq_P$ is compatible with $+$.

For Example \ref{eg: positive cone not compatible}, again $P=A$ but $P$ is not right normal and so $\leq_P$ is not compatible with $+$.
\[
\begin{tabular}{c|c|c}
a & a+A & A+a \\
\hline
 0 & A & A \\
1 & \{1,2,4\} & \{1,4\} \\
2 & \{1,2,4\} & \{2,4\} \\
3 & \{1,2,3,4\} & \{1,2,3,4\} \\
 4 & \{4\} & \{4\}
\end{tabular}
\]
We remark that, in this case, $A$ is not right normal in itself but it is left normal --- $A+a\subseteq a+A$, for every $a\in A$ --- and so if we consider the preorder
\[a\leq_{P}^{'} b \Longleftrightarrow b\in a+P\]
then, using a result similar to the one of Proposition \ref{prop: iff P is right normal}, we conclude that $(A,+,0,\leq_{P}^{'})\in \Ord\Mon$.

\begin{remark}\label{remark:1}
For a submonoid M of a monoid A we have that $\leq_M = {{\leq}^{op}}_{\mathcal {L}}$
and ${{\leq}'}_M = {\leq^{op}}_{\mathcal {R}}$, where $\mathcal {L}$ and $\mathcal {R}$ are the
Green's relations defined, in additive notation, by
$$a \leq_{\mathcal {L}} b \Leftrightarrow M + a \subseteq M + b,$$
$$a \leq_{\mathcal {R}} b \Leftrightarrow a + M \subseteq b + M.$$
Indeed,
\[ a \leq_M b \Leftrightarrow b = x + a, \mbox{ for some } x \in M  \Leftrightarrow M + b \subseteq M + a
 \Leftrightarrow b \leq_{\mathcal {L}} a,\]
 and the same for $\leq_{\mathcal {R}}$.

\end{remark}

\begin{corollary}\label{cor: 2.7} For every submonoid $M$ of a commutative preordered monoid $(A,+,0,\leq)$, the preorders $\leq_M$ and $\leq_M^{'}$ coincide and, moreover, $(A, +,0,\leq_M)$ is a preordered monoid.
\end{corollary}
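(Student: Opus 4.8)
The plan is to extract both assertions from the single fact that commutativity of $A$ forces $M+a=a+M$ for every $a\in A$. Recall that $a\leq_M b$ means $b\in M+a$, while $a\leq_M^{'} b$ means $b\in a+M$.

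First I would note that, since $A$ is commutative, $M+a=\{m+a\mid m\in M\}=\{a+m\mid m\in M\}=a+M$ for each $a$. Hence $b\in M+a$ if and only if $b\in a+M$, which is precisely the coincidence $\leq_M={\leq_M^{'}}$; this disposes of the first claim immediately. The same identity also shows that $M$ is normal in the sense of Definition \ref{def: normal submonoid}.

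For the remaining claim it suffices to check that $+$ is monotone for $\leq_M$ (that $\leq_M$ is a preorder is automatic for any submonoid, using $0\in M$ for reflexivity and closure of $M$ under $+$ for transitivity). Here I have two interchangeable routes. The structural one is to invoke Proposition \ref{prop: iff P is right normal}: although it is phrased for a positive cone $P$, its proof uses only that $P$ is a submonoid together with the description of $\leq_P$, and never the ambient preorder $\leq$; hence the equivalence ``$\leq_M$ compatible $\iff M$ right normal'' persists for an arbitrary submonoid, and right normality of $M$ (a consequence of commutativity) finishes the argument. The hands-on route is the short direct computation: if $b=x+a$ and $d=y+c$ with $x,y\in M$, then commutativity gives $b+d=x+a+y+c=(x+y)+(a+c)$ with $x+y\in M$, so $a+c\leq_M b+d$.

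The only delicate point is the transfer of Proposition \ref{prop: iff P is right normal} from positive cones to a general submonoid $M$, which need not be the positive cone of $(A,+,0,\leq)$; this is exactly the ``result similar to Proposition \ref{prop: iff P is right normal}'' alluded to after Example \ref{eg: positive cone not compatible}, and it is settled by the observation that the computation in that proof is entirely internal to the pair $(A,M)$ and the relation $\leq_M$. Given the one-line direct verification above, no serious obstacle remains.
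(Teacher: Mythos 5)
Your proposal is correct and takes essentially the route the paper intends for this corollary: commutativity gives $M+a=a+M$, so the two preorders coincide and $M$ is (right) normal, whence compatibility of $+$ with $\leq_M$ follows from the argument of Proposition \ref{prop: iff P is right normal}, whose proof—as you rightly observe—uses only that $P$ is a submonoid defining $\leq_P$, never that it is the positive cone of the ambient preorder. Your added one-line direct computation simply makes self-contained what the paper leaves implicit.
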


Obviously, the positive cone of a commutative preordered monoid need not determine the preorder: for
\[\begin{tabular}{c|ccc|}
+ & 0 & 1 & 2 \\
\hline
0 & 0 & 1 & 2 \\
1 & 1 & 1 & 1 \\
2  & 2 & 1 & 1
\end{tabular} \]
with $P=A$  and $\leq$ as sketched below
\[\xymatrix{1\ar@<.5ex>[r] & 2 \ar@<.5ex>[l]}\]
the right (= left) cosets are
\[
\begin{tabular}{c|c}
a & P+a \\
\hline
0 & P \\
1 & \{1\} \\
2 & \{1,2\} \\
\end{tabular}
\]
and so $\leq_P$ is
\[\xymatrix{1 & 2 \ar[l] },\]
but $1\leq_P 2$ because $2\notin P+1$.

Let us denote by $\Ord\Mon^{*}$ the full subcategory of $\OrdMon$ with objects the preordered monoids such that $\leq = \leq_P$. And the same for the commutative case, $\Ord\C\Mon^{*}$.

\begin{proposition}\label{prop: 2.9} The subcategory $\Ord\C\Mon^{*}$ is coreflective in the category $\Ord\C\Mon$.
\end{proposition}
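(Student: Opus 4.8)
The plan is to exhibit, for each object $(A,+,0,\leq)$ of $\Ord\C\Mon$, an explicit coreflection into $\Ord\C\Mon^{*}$. The natural candidate is $C(A)=(A,+,0,\leq_P)$, where $P$ is the positive cone of $A$, together with the counit $\varepsilon_A\colon C(A)\to A$ given by the identity on the underlying monoid. By Corollary \ref{cor: 2.7}, in the commutative case $(A,+,0,\leq_P)$ is again a preordered monoid, so $C(A)$ is a well-defined object of $\Ord\C\Mon$; and since $a\leq_P b$ implies $a\leq b$, the map $\varepsilon_A$ is monotone, hence a morphism of $\Ord\C\Mon$.

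First I would check that $C(A)$ actually lies in $\Ord\C\Mon^{*}$. The positive cone of $(A,+,0,\leq_P)$ is $\{a\in A\mid 0\leq_P a\}=\{a\in A\mid a\in P\}=P$, so the preorder induced by this cone is $\leq_P$ itself; thus $C(A)$ satisfies $\leq\,=\,\leq_P$ by construction and belongs to the starred subcategory.

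Next I would verify the couniversal property of $\varepsilon_A$. Let $X=(X,+,0,\leq_X)$ be an object of $\Ord\C\Mon^{*}$, with positive cone $Q$, so that $\leq_X\,=\,\leq_Q$, and let $f\colon X\to A$ be a morphism of $\Ord\C\Mon$. Since $\varepsilon_A$ is the identity on underlying monoids, the only possible factorisation is the same underlying map $f$ regarded now as $X\to C(A)$; the real content is that this map is monotone for $\leq_X$ and $\leq_P$. If $a\leq_X b$, then $b=x+a$ for some $x\in Q$; as $f$ is a monotone homomorphism it sends positive elements to positive elements, so $0\leq f(x)$, i.e. $f(x)\in P$, whence $f(b)=f(x)+f(a)\in P+f(a)$ and $f(a)\leq_P f(b)$. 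Hence $f\colon X\to C(A)$ is a morphism of $\Ord\C\Mon^{*}$ with $\varepsilon_A\circ f=f$, and it is the unique such factorisation because $\varepsilon_A$ is the identity on underlying monoids. This exhibits $C$ as a right adjoint to the inclusion, i.e. the subcategory as coreflective.

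The main obstacle is precisely the monotonicity step in the last paragraph, and it is where both hypotheses enter essentially: that $X$ lies in $\Ord\C\Mon^{*}$ (so its preorder is recovered from its cone, letting us write $b=x+a$ with $x\in Q$), and that $f$ preserves positivity. It is also worth stressing that the restriction to the commutative case is not cosmetic: by Corollary \ref{cor: 2.7} the passage from $\leq$ to $\leq_P$ stays inside $\Ord\C\Mon$, whereas Example \ref{eg: positive cone not compatible} shows that for a general preordered monoid $\leq_P$ need not be compatible with $+$, so $C(A)$ would fail to be an object at all.
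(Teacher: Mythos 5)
Your proof is correct and follows essentially the same route as the paper's: take $(A,+,0,\leq_P)$ as the coreflection of $(A,+,0,\leq)$ with the identity map as counit, use Corollary \ref{cor: 2.7} to ensure compatibility of $\leq_P$ in the commutative case, and verify the couniversal property via the fact that monotone homomorphisms preserve positive cones. Your explicit check that the positive cone of $(A,\leq_P)$ is again $P$ (so that $C(A)$ really lies in $\Ord\C\Mon^{*}$) is a small detail the paper leaves implicit, but otherwise the two arguments coincide.
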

\begin{proof}
If $(A,+,0,\leq)$ is a  preordered commutative monoid and $P$ is its positive cone then, by Corollary \ref{cor: 2.7}, $(A,+,0,\leq_P)\in \Ord\C\Mon^{*}$. Furthermore, the identity morphism $c_A\colon{(A,\leq_P)\to (A,\leq)}$ is the coreflection. Indeed, given a morphism $f\colon{(A',\leq_{P'})\to (A,\leq)}$ in $\Ord\C\Mon$ if $a'\in P'$ then $f(a')\in P$ ($a'\geq 0 \Rightarrow f(a')\geq 0$) and so $f(P')\subseteq P$. Consequently $f$ factors through $c_A$
\[\xymatrix{(A,\leq_P)\ar[r]^{C_{(A,\leq)}}& (A,\leq)\\(A',\leq_{P'})\ar@{-->}[u]^{\bar{f}}\ar[ur]_{f}}\]
by a unique homomorphism $\bar{f}\in Ord\C\Mon^{*}$ because if $a'_1\leq_{P'} a'_2$ then $a'_2\in P'+a'_1$ and so
\[f(a'_2)\in f(P')+f(a'_1)\subseteq P+f(a'_1).\]
Hence, $f(a'_1)\leq_P f(a'_2)$ and so $\bar{f}(a'_1)\leq_P \bar{f}(a'_2)$ for all $a'_1\leq_{P'} a'_2$ in $A'$.
\end{proof}

\begin{definition}\label{def: right normal mono}
We say that a monomorphism of monoids $m\colon{S\to A}$ is \emph{right normal} if $m(S)$ is a right normal submonoid of $A$ and we denote by $\RNMono(\Mon)$ the corresponding  full subcategory of the category of monomorphisms of monoids, $\Mono(\Mon)$.
\end{definition}

Example \ref{eg: positive cone not compatible}  shows that the identity morphisms may not be a right normal monomorphism.

\begin{theorem}\label{thm: 2.11}
The category $\Ord\Mon^{*}$ is isomorphic to the one of right normal monomorphisms in $\Mon$, $\RNMono(\Mon)$.
\end{theorem}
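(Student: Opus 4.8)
The plan is to exhibit a pair of mutually inverse functors between the two categories. In one direction I would define $F\colon \Ord\Mon^{*}\to\RNMono(\Mon)$ by sending an object $(A,+,0,\leq)$ to the inclusion $\iota_P\colon P\hookrightarrow A$ of its positive cone $P=\{a\in A\mid 0\leq a\}$. This is well defined: $P$ is a submonoid of $A$, and since objects of $\Ord\Mon^{*}$ satisfy $\leq=\leq_P$, the monoid operation is monotone for $\leq_P$, so $P$ is right normal by Proposition~\ref{prop: iff P is right normal}; hence $\iota_P$ is a right normal monomorphism. In the other direction I would define $G\colon\RNMono(\Mon)\to\Ord\Mon^{*}$ by sending a right normal monomorphism $m\colon S\to A$ to $(A,+,0,\leq_{m(S)})$. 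Here $\leq_{m(S)}$ is compatible with $+$ by Proposition~\ref{prop: iff P is right normal} (as $m(S)$ is right normal), so this is a preordered monoid; and its positive cone is exactly $\{a\in A\mid a\in m(S)+0\}=m(S)$, whence $\leq_{m(S)}=\leq_P$ for $P=m(S)$, so the object indeed lies in $\Ord\Mon^{*}$.

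The heart of the argument is the correspondence on morphisms, which I would isolate as a single lemma: a monoid homomorphism $f\colon A\to A'$ is monotone from $(A,\leq_P)$ to $(A',\leq_{P'})$ if and only if $f(P)\subseteq P'$. The forward implication follows by applying monotonicity to $0\leq_P a$ for $a\in P$; the converse by writing $b=x+a$ with $x\in P$ whenever $a\leq_P b$ and noting $f(b)=f(x)+f(a)\in P'+f(a)$. Under this lemma the morphisms match precisely: a morphism of monomorphisms from $m$ to $m'$ is a pair $(g,f)$ with $fm=m'g$, and since $m'$ is monic such a pair exists if and only if $f(m(S))\subseteq m'(S')$, with $g$ then uniquely determined. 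Thus $F$ acts on arrows by $f\mapsto(f|_P,f)$ and $G$ by $(g,f)\mapsto f$, and functoriality of both is a routine check.

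Finally I would verify that $F$ and $G$ are mutually inverse. The composite $G\circ F$ is the identity on $\Ord\Mon^{*}$ on the nose: starting from $(A,\leq)$ with $\leq=\leq_P$, passing to the positive cone and back to the induced preorder returns $(A,\leq_P)=(A,\leq)$, and $GF(f)=f$. For $F\circ G$, the positive cone of $(A,\leq_{m(S)})$ is $m(S)$, so the composite returns the inclusion $\iota_{m(S)}\colon m(S)\hookrightarrow A$. The one delicate point—and the main obstacle to a literal identity—is that $F\circ G$ replaces a monomorphism $m\colon S\to A$ by the inclusion of its image; these agree exactly once one regards an object of $\RNMono(\Mon)$ through the canonical identification of a monomorphism with the inclusion of its image (equivalently, objects of $\RNMono(\Mon)$ read as right normal submonoids of their codomains). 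Under this standard identification $F\circ G$ is also the identity, and the two functors give the asserted isomorphism of categories; the bulk of the remaining work is the bookkeeping of this identification together with the morphism lemma above.
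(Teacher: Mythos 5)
Your proposal is correct and takes essentially the same approach as the paper: the positive-cone inclusion in one direction, the induced preorder $\leq_{m(S)}$ in the other, and the morphism correspondence ($f$ monotone $\Leftrightarrow f(P)\subseteq P'$) doing the real work. The one point you handle more explicitly than the paper — identifying a monomorphism with the inclusion of its image so that the composite is the identity on the nose rather than merely naturally isomorphic to it — is left implicit in the paper's proof and only surfaces later, when objects of $\RNMono(\Mon)$ are assumed to be inclusions.
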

\begin{proof}
The functor $G\colon{\Ord\Mon^{*}\to \RNMono(\Mon)}$ defined by
\[
\xymatrix{(A,\leq_P) \ar[d]_{f}\\(A',\leq_{P'})}\mapsto \xymatrix{ P\ar[r]^{}\ar[d]_{f|_P} & A \ar[d]^{f} \\ P'\ar[r] & A'}
\]
has an inverse $F\colon{\RNMono(Mon)\to \Ord\Mon^{*}}$ assigning
\[
\xymatrix{ S\ar[r]^{}\ar[d]_{f'} & A \ar[d]^{f} \\ S'\ar[r] & A'}\mapsto \xymatrix{(A,\leq_S) \ar[d]_{f}\\(A',\leq_{S'})}
\]
where $f(S)\subseteq S'$ implies that $f\in \Ord\Mon^{*}$. Then $GF(S\to A)=G(A,\leq_S)=(S\to A)$ and $FG(A,\leq_P)=F(P\to A)=(A,\leq_P)$.
\end{proof}

\vspace*{2mm}

The following are examples, inspired by \cite{FW92}, of  objects in $\Ord\Mon^{*}$.

\begin{enumerate}

\item  The set of all $R$-submodules of a module $A$ over a ring $R$, equipped with the ``Minkovski sum"
\[ U + V = \{ u + v : u \in U \mbox{ and } v \in V \} \] and the order defined by the inclusion. Indeed, in this case every element is positive and  $U\subseteq V$ if and only if $V=V+U$.

\item All injective objects in $\OrdMon$  with respect to embeddings (not to monomorphisms) are objects in $\Ord\Mon^{*}$. In fact, let $M$ be the submonoid of the monoid $\N \times \N$,
    generated by $(1, 0)$ and $(1, 1)$ with the order induced by the product order and $i\colon M \to \N \times \N$ the embedding. If  $a \leq b$ in an injective object $A$ then there exists a (unique) morphism in $\OrdMon$, $u \colon M \to A$
    such that $u(1, 0)= a$ and $u(1, 1)= b$, defined by $u(n+m,m)=na+mb$, for every $n,m\in \N$. By injectivity of $A$, there exists a morphism $v\colon{ \N \times \N \to A}$
\[\xymatrix{M \ar[r]^(.4){i} \ar[d]_{u} & \N\times \N \ar[ld]^{v}\\ A}\]
    extending $v$, that is such that $v \cdot i = u$. Then taking $c = v(0, 1)$ we have that $b = c + a \in P + a$ and so the preorder in $A$ coincides with the one induced by its positive cone. Indeed, since $(0,0)\leq (0,1)$ and $v$ preserves the order then $0\leq c$.

\end{enumerate}

\vspace*{2mm}

Let $\Ord\Mon^{\square}$ be the full subcategory of $\Ord\Mon$ with objects all preordered monoids whose positive cone is a right normal monoid.

\begin{proposition}\label{prop: 2.12}
The category $\Ord\Mon^{*}$ is coreflective in $\Ord\Mon^{\square}$.
\end{proposition}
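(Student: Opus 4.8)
The plan is to follow the pattern of Proposition \ref{prop: 2.9}, with Proposition \ref{prop: iff P is right normal} taking over the role that Corollary \ref{cor: 2.7} played in the commutative setting. Given an object $(A,+,0,\leq)$ of $\Ord\Mon^{\square}$ with positive cone $P$, the candidate coreflection is the identity-on-underlying-monoid map $c_A\colon (A,\leq_P)\to (A,\leq)$, so the coreflector sends $(A,\leq)$ to $(A,\leq_P)$.

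First I would verify that $(A,+,0,\leq_P)$ genuinely lies in $\Ord\Mon^{*}$. Since $(A,\leq)\in\Ord\Mon^{\square}$, its positive cone $P$ is right normal, so by Proposition \ref{prop: iff P is right normal} the operation $+$ is monotone with respect to $\leq_P$; hence $(A,+,0,\leq_P)\in\Ord\Mon$. Next I would compute the positive cone of this new object as $\{a\in A\mid 0\leq_P a\}=\{a\in A\mid a\in P+0\}=P$, so $\leq_P$ is precisely the preorder induced by the positive cone of $(A,\leq_P)$, giving $(A,\leq_P)\in\Ord\Mon^{*}$. That $c_A$ is a morphism of preordered monoids is exactly the implication $a\leq_P b\Rightarrow a\leq b$ established in the first Proposition above.

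For the universal property, let $f\colon (A',\leq_{P'})\to (A,\leq)$ be a morphism in $\Ord\Mon^{\square}$ whose domain lies in $\Ord\Mon^{*}$, with positive cone $P'$. Arguing as in Proposition \ref{prop: 2.9}, monotonicity of $f$ forces $f(P')\subseteq P$, because $0\leq a'$ implies $0=f(0)\leq f(a')$. Then whenever $a'_1\leq_{P'}a'_2$, i.e. $a'_2\in P'+a'_1$, one obtains $f(a'_2)\in f(P')+f(a'_1)\subseteq P+f(a'_1)$, so $f(a'_1)\leq_P f(a'_2)$; thus the same underlying homomorphism defines a morphism $\bar f\colon (A',\leq_{P'})\to (A,\leq_P)$ in $\Ord\Mon^{*}$ with $c_A\circ\bar f=f$. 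Uniqueness is immediate, since $c_A$ is the identity on underlying monoids, so any factorization must coincide with $f$ as a function.

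The only delicate point, and the place where the hypothesis defining $\Ord\Mon^{\square}$ is essential, is the first step: without right normality of $P$ the preorder $\leq_P$ need not be compatible with $+$ (Example \ref{eg: positive cone not compatible}), and $(A,\leq_P)$ would then fail to be an object of $\Ord\Mon$ at all, so the coreflector would not even be defined. Everything else is a verbatim transcription of the commutative argument, since neither the computation of the positive cone nor the factorization makes any use of commutativity.
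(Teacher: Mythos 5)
Your proposal is correct and follows exactly the route the paper intends: its proof of this proposition is literally ``essentially the same as the one of Proposition~\ref{prop: 2.9}'', and your write-up is that argument spelled out, with Proposition~\ref{prop: iff P is right normal} (right normality of $P$ gives compatibility of $\leq_P$) correctly identified as the replacement for Corollary~\ref{cor: 2.7} in the noncommutative setting. Your closing remark about where the $\Ord\Mon^{\square}$ hypothesis is genuinely needed matches the paper's own discussion (Example~\ref{eg: positive cone not compatible}).
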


\begin{proof}
Essentially the same as the one of Proposition
 \ref{prop: 2.9}.
\end{proof}

Summing up, we have the following commutative diagram of categories and functors
%\newline\texttt{Falta colocar o s\'{\i}mbolo de adjun\c{c}\~{a}o}
\[
\xymatrix{\Ord\Mon \ar[rrr] &&& \Mono(\Mon)\\
& \Ord\Mon^{\square} \ar[ul]\ar@<-.5ex>[dl] \\ \Ord\Mon^{*} \ar[uu] \ar@<-.5ex>[ur] \ar@<.5ex>[rrr]^{\cong} &&& \RNMono(\Mon) \ar@<.5ex>[lll] \ar[uu]}
\]
where $\Ord\Mon^{*}$ is coreflective in $\Ord\Mon^{\square}$ but $\Ord\Mon^{\square}$ is not coreflective in $\Ord\Mon$ as we prove in the following section.

\section{The forgetful functors}

Let us consider the following commutative diagram of forgetful functors
\[
\xymatrix{\Ord\Mon \ar[r]^{U_2} \ar[d]_{U_1} & \Mon\ar[d]^{V_1}\\ \Ord\ar[r]_{V_2} & \Set}
\]
where $V_2$ is topological and $V_1$ is a monadic functor. We are going to prove that also $U_2$ is a topological functor and $U_1$ is a monadic one.

\begin{proposition}\label{prop: 3.1}
The functor $U_2 \colon \Ord\Mon\to\Mon$ is a topological functor.
\end{proposition}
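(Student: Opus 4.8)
The plan is to verify the defining property of a topological functor directly, namely that every $U_2$-structured source admits a unique $U_2$-initial lift. Accordingly, I would begin with an arbitrary monoid $B$ together with a family of monoid homomorphisms $(f_i\colon B\to U_2(A_i,\leq_i))_{i\in I}$, where each $(A_i,+,0,\leq_i)$ is an object of $\Ord\Mon$, and construct the canonical lift of this source to $\Ord\Mon$.

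First I would equip the underlying monoid $B$ with the preorder obtained by pulling back all the $\leq_i$ along the $f_i$, that is,
\[
b\leq b' \quad\Longleftrightarrow\quad f_i(b)\leq_i f_i(b')\ \text{ for every } i\in I.
\]
Reflexivity and transitivity of $\leq$ are immediate from those of the $\leq_i$, so $\leq$ is a preorder. The only step that uses more than the order axioms is the compatibility of $\leq$ with $+$: if $b_1\leq b_1'$ and $b_2\leq b_2'$, then for each $i$ compatibility of $\leq_i$ gives $f_i(b_1)+f_i(b_2)\leq_i f_i(b_1')+f_i(b_2')$, and since $f_i$ is a monoid homomorphism this reads $f_i(b_1+b_2)\leq_i f_i(b_1'+b_2')$, whence $b_1+b_2\leq b_1'+b_2'$. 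Thus $(B,+,0,\leq)$ lies in $\Ord\Mon$ with $U_2(B,\leq)=B$, and by construction every $f_i\colon(B,\leq)\to(A_i,\leq_i)$ is monotone, hence a morphism of $\Ord\Mon$ lifting $f_i$.

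Next I would check that this lift is $U_2$-initial. Given any preordered monoid $(C,\leq_C)$ and a monoid homomorphism $g\colon C\to B$ such that each composite $f_i\circ g$ is monotone, I must show that $g\colon(C,\leq_C)\to(B,\leq)$ is itself monotone: if $c\leq_C c'$ then $f_i(g(c))\leq_i f_i(g(c'))$ for all $i$, which by the definition of $\leq$ is precisely $g(c)\leq g(c')$. This provides the required factorization, unique because its underlying map is forced to be $g$. Uniqueness of the lift itself then follows by the standard argument: if $\leq'$ is another compatible preorder on $B$ giving an initial lift of the same source, applying the universal property of each lift to the identity $\mathrm{id}_B$ shows that $\mathrm{id}_B$ is monotone both from $(B,\leq)$ to $(B,\leq')$ and in reverse, so $\leq={\leq'}$.

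I do not anticipate a genuine obstacle here, since the construction is the evident ``initial preorder'' and every verification reduces to unwinding definitions. The one place where the monoid structure enters, rather than merely the order, is the compatibility of $\leq$ with $+$; this is the step I would write out most carefully, as it is exactly what guarantees that the pulled-back preorder lands in $\Ord\Mon$ and not merely in $\Ord$.
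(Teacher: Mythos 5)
Your proof is correct and takes essentially the same approach as the paper: both equip the underlying monoid with the preorder pulled back along the $f_i$ and observe that compatibility with $+$ follows from each $f_i$ being a monoid homomorphism. You in fact go further than the paper's proof, which stops after the compatibility check and leaves the $U_2$-initiality and uniqueness verifications implicit.
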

\begin{proof}
Given a family of monoid homomorphisms \[f_i\colon{(X,+,0)\to U_2(A_i,+,0,\leq_{i})},\] for $i\in I$, defining for $x,x'\in X$ \[x\leq x' \Leftrightarrow f_i(x)\leq_{i}f_i(x'),\forall i\in I,\]
we obtain a preorder which, in addition, is compatible with the monoid operation:
\begin{eqnarray*}
x\leq x'\text{ and }y\leq y' &\Leftrightarrow& \forall i\in I, f_i(x)\leq f_i(x') \text{ and } f_i(y)\leq f_i(y')\\
&\Leftrightarrow& \forall i\in I, f_i(x)+ f_i(y) \leq f_i(x')+ f_i(y')\\
&\Leftrightarrow& \forall i\in I, f_i(x+y) \leq f_i(x'+y')\\
&\Leftrightarrow& x+y\leq x'+y'.
\end{eqnarray*}
\end{proof}

From that we conclude that:
\begin{enumerate}
\item $U_2$ has a left and a right adjoint defined by equipping each monoid with the discrete and the total
preorder, respectively;
\item $\Ord\Mon$ is complete and cocomplete, since $\Mon$ is complete and cocomplete, and $U_2$ preserves limits and colimits.
\end{enumerate}

\begin{proposition}
The functor $U_1 \colon \Ord\Mon \to \Ord $ has a left adjoint.
\end{proposition}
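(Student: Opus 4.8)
The plan is to exhibit the left adjoint explicitly as a \emph{free preordered monoid} functor $L\colon\Ord\to\Ord\Mon$. On a preordered set $(X,\leq)$ I would take as underlying monoid the free monoid $X^{*}$ on the underlying set of $X$, whose elements are finite words $x_{1}\cdots x_{n}$ under concatenation with the empty word $\epsilon$ as unit, and I would equip it with the smallest preorder $\preceq$ on $X^{*}$ that is compatible with concatenation and for which the single-letter map $\eta\colon X\to X^{*}$, $x\mapsto x$, is monotone. The candidate unit of the adjunction is exactly this map $\eta_{(X,\leq)}$, and the whole proof reduces to giving a workable description of $\preceq$ and then checking the universal property against it.

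First I would describe $\preceq$ concretely and claim that $x_{1}\cdots x_{n}\preceq y_{1}\cdots y_{m}$ holds if and only if $n=m$ and $x_{i}\leq y_{i}$ for every $i$, i.e.\ $\preceq$ is the componentwise preorder on words of equal length. This relation is plainly reflexive, transitive, compatible with concatenation, and contains the generating single-letter comparabilities, so it contains no less than the generated preorder; conversely, in any compatible preorder containing those generators one passes from $x_{1}\cdots x_{n}$ to $y_{1}\cdots y_{n}$ by replacing one letter at a time (using compatibility against the identity relation on the remaining letters) and composing the resulting steps by transitivity, so the componentwise preorder is contained in the generated one. Since each generator and each closure step preserves word length, no comparabilities between words of different lengths ever arise, and $\epsilon$ is related only to itself. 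This shows $L(X,\leq)=(X^{*},\cdot,\epsilon,\preceq)\in\Ord\Mon$ and that $\eta_{(X,\leq)}$ is monotone.

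Next I would verify the universal property. Given $(M,+,0,\sqsubseteq)\in\Ord\Mon$ and a monotone map $f\colon(X,\leq)\to U_{1}(M)$, the universal property of the free monoid supplies a \emph{unique} monoid homomorphism $\bar{f}\colon X^{*}\to M$ with $\bar{f}\circ\eta=f$, namely $\bar{f}(x_{1}\cdots x_{n})=f(x_{1})+\cdots+f(x_{n})$. It remains only to see that $\bar{f}$ is monotone: if $x_{1}\cdots x_{n}\preceq y_{1}\cdots y_{n}$ then $x_{i}\leq y_{i}$ for all $i$, hence $f(x_{i})\sqsubseteq f(y_{i})$ since $f$ is monotone, and compatibility of $\sqsubseteq$ with $+$ in $M$ gives $\sum_{i}f(x_{i})\sqsubseteq\sum_{i}f(y_{i})$, that is $\bar{f}(x_{1}\cdots x_{n})\sqsubseteq\bar{f}(y_{1}\cdots y_{n})$. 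Thus $\bar{f}$ is a morphism of $\Ord\Mon$, and it is the unique such with $U_{1}(\bar{f})\circ\eta_{(X,\leq)}=f$, establishing $L\dashv U_{1}$.

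Finally, I note that because $\Ord\Mon=\Mon(\Ord)$ and $\Ord$ with the product preorder is a cocomplete symmetric monoidal category whose tensor preserves colimits in each variable, the existence of this left adjoint also follows from the general results on $\Mon(\C)\to\C$ recalled in \cite{K80}, \cite{L10} and \cite{P08}, the free monoid being $\coprod_{n}X^{\otimes n}$. The only genuinely delicate point, on either route, is the identification of the freely generated compatible preorder: in the direct construction it is the verification that closing the single-letter comparabilities under transitivity and concatenation-compatibility yields exactly the componentwise order (and nothing relating words of distinct lengths), while in the abstract route it is the matching verification that the coproduct $\coprod_{n}X^{\otimes n}$ carries precisely this preorder. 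Everything else is the routine bookkeeping of the free-monoid universal property.
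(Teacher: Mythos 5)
Your proposal is correct and takes essentially the same approach as the paper: the free monoid $X^{*}$ of words under concatenation, equipped with the componentwise preorder on words of equal length, with unit $x\mapsto [x]$, and with monotonicity of the induced homomorphism $\bar{f}$ verified via compatibility of the preorder with $+$ in the target. The only difference is presentational — you characterize this preorder as the one generated by the single-letter comparabilities and then identify it with the componentwise one, while the paper simply defines the componentwise preorder outright.
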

\begin{proof} Let $F(X, \leq) = (X^{*}, \text{con}, [\;], \leq)$, where $X^{*}$ is the set of all words in the alphabet $X$ with the operation of concatenation,  having the empty word $[\;]$ as identity (the free monoid on the set $X$), equipped with the preorder
\[w =[w_1 \cdots w_n] \leq w' = [{w'}_1 \cdots {w'}_m] \] if and only if
$n = m$ and $w_i \leq {w'}_i$ for $ i = 1, 2 , \cdots n$.
This way we define a preorder compatible with concatenation.

The morphism
\[ \eta_{(X, \leq)} \colon (X, \leq) \to U_1(X^{*}, \text{con}, [\;], \leq), \] which assigns to each $x\in X$ the singular word $[ x ]$, is universal from $(X, \leq)$ to $U_1$:
\[\xymatrix{(X,\leq)\ar[r]^(.35){\eta_{(X,\leq)}}\ar[rd]_{f} & U_1(X^{*}, \text{con}, [\;], \leq)\ar@{-->}[d]^{U_1\bar{f}} & (X^{*}, \text{con}, [\;], \leq)\ar@{-->}[d]_{\bar{f}}\\ & U_1(A,+, 0, \leq) & (A,+, 0, \leq)}\]
for each $f$ in $\Ord$ there exists a unique $\bar{f}\in \Ord\Mon$ such that $\bar{f}([x])=f(x)$ and so $\bar{f}([x_1\,x_2\,\cdots\, x_n])=f(x_1)+f(x_2)+\cdots + f(x_n)$, because $\bar{f}\in \Mon$. And $\bar{f}$ is monotone: if $x=[x_1\,x_2\,\cdots\, x_n]\leq y=[y_1\,y_2\,\cdots\, y_n]$, since $x_i\leq y_i$, $i=1,\cdots,n$, then $f(x_1)+f(x_2)+\cdots + f(x_n)\leq f(y_1)+f(y_2)+\cdots + f(y_n)$, i.e. $f(x)\leq f(y)$.

Consequently, this defines a functor  $$F_1\colon{\Ord\to\Ord\Mon}$$ that is left adjoint of $U_1$ with unit $\eta$.
\end{proof}

\begin{proposition}\label{prop:3.3}
The functor $U_1\colon{\Ord\Mon\to \Ord}$ is monadic.
\end{proposition}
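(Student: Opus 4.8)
The plan is to verify the hypotheses of Beck's monadicity theorem for the adjunction $F_1\dashv U_1$ established above. Concretely, I would check three things: that $U_1$ has a left adjoint (already done), that $U_1$ reflects isomorphisms, and that $U_1$ creates (in particular has and preserves) coequalizers of those parallel pairs $(f,g)$ in $\Ord\Mon$ whose image under $U_1$ admits a split coequalizer. Reflecting isomorphisms is immediate: if $h\colon(A,+,0,\leq)\to(B,+,0,\leq)$ is a morphism of $\Ord\Mon$ with $U_1 h$ an isomorphism of $\Ord$, then $h$ is an order-isomorphism whose underlying map is a bijective monoid homomorphism, so the set-inverse is both a monoid homomorphism and monotone, hence a morphism of $\Ord\Mon$ inverse to $h$.

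The heart of the argument is the creation of coequalizers, and here I would exploit the commutative square $V_2U_1=V_1U_2$ together with the facts that $V_1$ is monadic and that split coequalizers are \emph{absolute}, i.e.\ preserved by every functor. Given $f,g\colon A\rightrightarrows B$ in $\Ord\Mon$ and a split coequalizer $q\colon U_1B\to Q$ of $(U_1f,U_1g)$ in $\Ord$, with monotone section $s\colon Q\to U_1B$ satisfying $qs=1_Q$ and the remaining splitting data, I would proceed as follows. Since split coequalizers are absolute, $V_2q$ is a split coequalizer of $(V_1U_2f,V_1U_2g)$ in $\Set$, so $(U_2f,U_2g)$ is a $V_1$-split pair. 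As $V_1$ is monadic it creates this coequalizer, producing a unique monoid structure on the underlying set of $Q$ for which $q$ is a monoid homomorphism and a coequalizer of $(U_2f,U_2g)$ in $\Mon$.

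The remaining and crucial step is to see that this monoid structure is compatible with the preorder of $Q$, and I expect this to be the main obstacle. The key observation is that the induced operation $+_Q$ can be rewritten by means of the section: since $q$ is a homomorphism one has $+_Q\circ(q\times q)=q\circ(+_B)$, and precomposing with the monotone map $s\times s$ (using $qs=1_Q$) yields $+_Q=q\circ(+_B)\circ(s\times s)$, a composite of monotone maps and therefore monotone. Hence $(Q,+_Q,0_Q,\leq_Q)$ is an object of $\Ord\Mon$ and $q$ a morphism therein. The same device shows $q$ is the coequalizer in $\Ord\Mon$: any $h\colon B\to C$ with $hf=hg$ factors through a unique monoid homomorphism $\bar h$ with $\bar h q=h$, and $\bar h=\bar h\,q\,s=h\,s$ is monotone, so $\bar h$ lies in $\Ord\Mon$.

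Finally, since the preorder on $Q$ is fixed and the monoid structure is the unique one supplied by the monadicity of $V_1$, the lift of $(Q,q)$ to $\Ord\Mon$ is unique, and $U_1$ sends it back to the given split coequalizer; thus $U_1$ creates coequalizers of $U_1$-split pairs, and Beck's theorem gives that $U_1$ is monadic. The essential point, and the reason the restriction to $U_1$-split pairs is exactly what is required, is that the monotone section $s$ is precisely what allows monotonicity to be transported from $B$ down to $Q$; a bare regular quotient in $\Ord$ would carry the \emph{final} preorder and would not obviously make $+_Q$ monotone.
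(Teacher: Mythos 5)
Your proof is correct, but it takes a genuinely different route from the paper's. The paper verifies Beck's criterion in the ``has and preserves'' form: it first constructs the coequalizer of an \emph{arbitrary} parallel pair $(f,g)$ in $\Ord\Mon$, by taking the coequalizer $q\colon B\to C$ of $(U_2f,U_2g)$ in $\Mon$ and equipping $C$ with the transitive closure of the image of the preorder of $B$; then, when $(U_1f,U_1g)$ has a contractible (split) coequalizer $h$, it shows that the comparison morphism $t$ with $t\cdot h=U_1(q)$ is an isomorphism --- bijectivity of the underlying map comes from the square $V_2U_1=V_1U_2$ and the monadicity of $V_1$, and order-reflection of $t$ comes from a zig-zag argument tracing the transitive closure back through $B$. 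You instead use the ``creation'' form: you never construct general coequalizers, but lift the given split coequalizer of $\Ord$ along $U_1$, invoking absoluteness of split coequalizers and monadicity of $V_1$ to create the monoid structure on $Q$, and then --- this is your key trick, which replaces the paper's zig-zag --- the identity $+_Q=q\circ(+_B)\circ(s\times s)$ to transport monotonicity through the monotone section $s$; the same device gives monotonicity of the factorization $\bar h=h\,s$. What the paper's approach buys is extra information: $\Ord\Mon$ is shown to have coequalizers of all parallel pairs, with an explicit description of the quotient preorder. What your approach buys is economy and robustness: the splitting does all the order-theoretic work, and you correctly isolate why the restriction to $U_1$-split pairs is exactly what makes the argument run, since the final preorder on a bare regular quotient would not obviously be compatible with the induced operation --- which is precisely the point the paper must address by hand in its zig-zag step.
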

\begin{proof} We recall that, by Beck's monadicity criterion (see e.g. Th.2.4 in \cite{MS04}), a right adjoint functor $U_1$ is monadic if and only if
\begin{itemize}
\item $U_1$ reflects isomorphisms;

\item $\Ord\Mon$ has and $U_1$ preserves coequalizers of all parallel pairs $(f,g)$ such that $(U_1(f), U_1(g))$ has a contractible coequalizer in $\Ord$.
\end{itemize}

Given a morphism $f\colon (A, +, 0, \leq) \to (B, +, 0, \leq)$ in $\Ord\Mon$ such that $U_1(f)$ is an isomorphism in $\Ord$ then, being also a bijective homomorphism of monoids, it is an isomorphism of monoids and so it is also an isomorphism in $\Ord\Mon$.
Hence $U_1$ reflects isomorphisms.

For a parallel pair of morphisms $f, g \colon (A, +, 0, \leq) \to (B, +, 0, \leq)$ in $\Ord\Mon$ let $q\colon (B, +, 0) \to (C, +, 0)$ be a coequalizer of $(U_2(f), U_2(g))$ in the category of monoids. Considering in $C$ the preorder that is the transitive closure of the image by $q$ of the preorder in $B$, it is easy to prove that this  preorder is compatible with the monoid operation, so that $(C, +, 0, \leq) \in \Ord\Mon$, and also that
$$q \colon (B, +, 0, \leq) \to (C, +, 0, \leq)$$ is the coequalizer of $(f, g)$ in this category.

Let us assume that $(U_1(f), U_2(g))$ has a contractible coequalizer $ (U_1(f), U_1(g), h; i, j)$ in $\Ord$. We have to prove that the unique morphism $t \in \Ord$ such that $ t \cdot h = U_1( q)$ is an isomorphism.

Since $ V_2 U_1 = V_1 U_2$ and $V_1$ is monadic,  we know that $V_2(t)$ is a bijection. Furthermore, if $ c = t(x) \leq t(y) = d$ then $x \leq y$. Indeed, by definition of the preorder in $C$, there exists a zig-zag in $B$
\[ b_1 \leq b_2 \sim {b'}_2 \leq b_3 \cdots b_{n-1} \sim  {b'}_{n-1} \leq b_n, \]
such that $q(b_1) = c , q(b_n) = d$ and $q(b_i) = q ({b'}_i)$ for $i = 2, \cdots n-1$. Thus $ x = h(b_1) \leq  h(b_n) = y$.
\end{proof}

\begin{proposition}\label{porp: 3.9}
The subcategory $\Ord\Mon^{\square}$ is not coreflective in the category $\Ord\Mon$.
\end{proposition}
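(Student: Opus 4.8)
The plan is to prove $\Ord\Mon^{\square}$ is not coreflective in $\Ord\Mon$ by exhibiting a single preordered monoid $(A,+,0,\leq)$ for which no coreflection can exist. Recall that a coreflection would be a morphism $c_A\colon (A,+,0,\leq') \to (A,+,0,\leq)$ with $(A,+,0,\leq')\in\Ord\Mon^{\square}$ (so its positive cone $P'$ is right normal) that is universal among morphisms into $(A,+,0,\leq)$ from objects of $\Ord\Mon^{\square}$. My strategy is to find an $A$ whose own positive cone $P$ fails to be right normal in such a way that the failure cannot be ``repaired'' by any universal choice of coarser-or-finer preorder. The natural candidate is to revisit Example~\ref{eg: positive cone not compatible}, where $P=A$ is not right normal, and to argue that the family of $\Ord\Mon^{\square}$-objects admitting a morphism to $A$ has no terminal object among them.

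The key steps, in order, are as follows. First I would make precise what a coreflection into this fixed $A$ must be: since the forgetful functor $U_2\colon\Ord\Mon\to\Mon$ from Proposition~\ref{prop: 3.1} is topological, any coreflection $c_A$ is carried by a bijective monoid morphism, and by topologicity we may as well take the underlying monoid to be $A$ itself with the identity map and some preorder $\leq'$ refining $\leq$. Second, I would observe that the universal property forces $\leq'$ to be the \emph{supremum} (join in the lattice of compatible preorders refining $\leq$) of all preorders $\leq''$ on $A$ with $(A,\leq'')\in\Ord\Mon^{\square}$ and $\leq''\,\subseteq\,\leq$; equivalently, $c_A$ must factor every such inclusion. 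Third, and this is the crux, I would show that this join itself fails to lie in $\Ord\Mon^{\square}$: one produces two (or more) preorders $\leq_1,\leq_2$ on $A$, each making the positive cone right normal and each refining $\leq$, whose join has a positive cone that is \emph{not} right normal. The coreflection would have to map through each $(A,\leq_k)$, hence its preorder would contain both $\leq_1$ and $\leq_2$ and their join, contradicting membership in $\Ord\Mon^{\square}$.

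The main obstacle I anticipate is the third step: right normality of the positive cone, $a+P\subseteq P+a$, is not preserved under joins of preorders, because enlarging the preorder enlarges $P=\{a\mid 0\leq a\}$, and the cosets $a+P$ and $P+a$ can grow incompatibly. I expect the cleanest route is to exhibit the concrete witnesses explicitly on a small monoid --- reusing the five-element monoid of Example~\ref{eg: positive cone not compatible}, whose coset table already displays the failure $a+A\not\subseteq A+a$ for $a=1$ --- and to check by hand that there exist admissible $\leq_1,\leq_2$ below a given $\leq$ whose join reintroduces the obstruction at the element $1$. By Remark~\ref{remark:1} this is exactly a statement about Green's $\mathcal L$-preorder being incompatible with $+$, so I would phrase the contradiction in those terms.

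Concretely, I would finish by supposing a coreflection $c_A\colon(A,\leq')\to(A,\leq)$ existed, deriving that $\leq_1\,\subseteq\,\leq'$ and $\leq_2\,\subseteq\,\leq'$, hence $\leq_1\vee\leq_2\,\subseteq\,\leq'$; since $(A,\leq')\in\Ord\Mon^{\square}$ its positive cone $P'\supseteq P_{\leq_1}\cup P_{\leq_2}$ must be right normal, yet the witnessing pair $a+x=z$ with $x\in P'$ but $z\notin P'+a$ survives, giving the contradiction. This yields that $\Ord\Mon^{\square}$ is not coreflective in $\Ord\Mon$.
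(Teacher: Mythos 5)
Your strategy is genuinely different from the paper's, and its skeleton (a coreflection counit into a fixed $(A,\leq)$ must be identity-carried, hence must be the largest compatible preorder contained in $\leq$ with right normal cone, so it suffices to exhibit one $A$ for which no such largest ``admissible'' preorder exists) is sound as a \emph{sufficient} criterion for non-coreflectivity. But the proof has a genuine gap exactly at the step you yourself identify as the crux: you never produce the admissible preorders $\leq_1,\leq_2$ whose join escapes $\Ord\Mon^{\square}$; you only announce that they can be found ``by hand'' on the monoid of Example \ref{eg: positive cone not compatible}. And in fact, for that monoid they cannot be found. On that five-element monoid the only right normal submonoids are $\{0\}$ and $\{0,4\}$: any submonoid $P$ containing $3$ contains $3+3=1$; if $1\in P$ then $2+1=1\in 2+P$ while $P+2\subseteq\{2,4\}$, and if $2\in P$ then $1+2=2\in 1+P$ while $P+1\subseteq\{1,4\}$, so right normality fails either way. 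Consequently every admissible preorder on this $A$ has positive cone inside $\{0,4\}$; and since no element other than $4$ lies above $4$ in $\leq$, any join of admissible preorders again has cone inside $\{0,4\}$, which \emph{is} right normal. So the supremum of all admissible preorders exists and is itself admissible: the contradiction you aim for never materializes with this witness, and finding a monoid where joins genuinely destroy right normality of the cone --- the real mathematical content of your approach --- is missing. (Two secondary issues: the reduction to an identity-carried counit does not follow from topologicity of $U_2$ but from the fact that $(\N,=)$ lies in $\Ord\Mon^{\square}$ and represents the underlying-set functor; and existence of the maximum admissible preorder does not conversely guarantee a coreflection, since universality must also be tested against non-identity-carried morphisms, so even a repaired version of your criterion only works in one direction.)

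For comparison, the paper avoids all of this combinatorics: it notes that every free preordered monoid $F_1(X,\leq)$ has positive cone $\{[\;]\}$, hence lies in $\Ord\Mon^{\square}$, so the left adjoint $F_1$ of $U_1$ corestricts to a left adjoint of ${U_1}^{\square}$ and both adjunctions induce the \emph{same} monad $T$ on $\Ord$. If $\Ord\Mon^{\square}$ were coreflective it would be closed under coequalizers in $\Ord\Mon$, Beck's criterion would make ${U_1}^{\square}$ monadic, and then $\Ord\Mon^{\square}\cong\Ord^T\cong\Ord\Mon$, contradicting the fact that the monoid of Example \ref{eg: positive cone not compatible} lies in $\Ord\Mon$ but not in $\Ord\Mon^{\square}$. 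That argument uses Example \ref{eg: positive cone not compatible} only to certify that the inclusion is proper --- a much weaker property than the one your plan needs from it.
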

\begin{proof}
For every preordered set $(X, \leq)$, $F_1(X, \leq)= (X^{*},\text{con},[\;],\leq)$ has positive cone $P = \{[\;]\}$
that is a right normal (indeed a normal) submonoid. Hence the preordered monoid $F_1(X, \leq) \in \Ord\Mon^{\square}$ and we have the following situation
\[
\xymatrix{\Ord\Mon^{\square}\ar@<.9ex>[dr]^{{U_1}^{\square}}\ar[rr] && \Ord\Mon\simeq \Ord^{T} \ar@<-.9ex>[dl]_{U_1} \\ & \Ord  \ar@<-.9ex>[ur]_{F_1} \ar@<.9ex>[ul]^{{F_1}^{\square}} }
\]
where ${U_1}^{\square}$ is the restriction of $U_1$ to $\Ord\Mon^{\square}$, ${F_1}^{\square}$ is the corestriction of $F_1$ giving a left adjoint to ${U_1}^{\square}$,  and $T$ is the monad that both adjunctions induce in $\Ord$.

From that we conclude that $\Ord\Mon^{\square}$ cannot be coreflective in $\Ord\Mon$ otherwise, being closed under coequalizers, ${U_1}^{\square}$ would be monadic and so $\Ord\Mon^{\square} \cong \Ord^T \cong \Ord\Mon$
that is false as Example 2 shows.

\end{proof}

%\begin{remark}\label{remark: 3.10}
Direct proofs presented in this section are simple and informative about the categories involved.

However, since $\Ord\Mon$ is the category $\Mon(\Ord)$ of internal monoids in the category of preordered sets (which is not true for ordered groups) these results can be derived from more general ones relative to categories of models of the theory of monoids in monoidal categories. In our case, since $\Ord$ is a cartesian closed category which, furthermore, is  locally finitely presentable (see \cite{AR}), the construction of the left adjoint of $U_1\colon{\Ord\Mon=\Mon(\Ord)\to\Ord}$ is a particular case of the construction of the left adjoint of the forgetful functor of $\Mon(\C)\to\C$, when $\C$ is a symmetric monoidal category, satisfying some additional conditions, presented by G. M. Kelly in \cite{K80}, see also \cite{L10}. Also the monadicity of $U_1$ comes from Corollary 2.6 in \cite{P08}.

 In more detail, S. Lack proves in \cite{L10} that
the forgetful functor of $\Mon(\C)\to\C$ has a left adjoint when $\C$ is a symmetric monoidal category with countable coproducts that are preserved by tensoring on either side, with the free monoid over an object $X \in \C$  given by
\[1 + X + X^2 +  \cdots \]
where $X^n$ means the nth-tensoring of $X$.

In \cite{P08}, H. Porst deals with ``admissible monoidal categories" which are locally presentable categories that, in addition, are symmetric mo\-noi\-dal with the property  that tensoring by a fixed object defines a finitary functor (i.e., a functor preserving directed colimits).

In the cartesian case, that is when the tensor is given by the direct product and the identity is the terminal object
in the monoidal category, if $\C$ is locally presentable and cartesian closed it is clearly admissible, in the above sense, and so, by Corollary 2.6 in \cite{P08} we conclude the monadicity of $\Mon(\C)$ over $\C$.
%\end{remark}

\section{Schreier split extensions}

We recall that,  in the category of monoids, a Schreier split epimorphism (\cite{BMMS13}) is a diagram
\begin{equation}\label{eq: S-splitepi in Mon}
\xymatrix{
X\ar@<-.5ex>[r]_{k} & A \ar@<.5ex>[r]^{p}\ar@{-->}@<-.5ex>[l]_{q} & B \ar@<.5ex>[l]^{s}}
\end{equation}
where $k$, $p$ and $s$ are monoid homomorphisms, $p s = 1_B$, $k$ is the kernel of $p$ and $q$ is a set-theoretical map (called the Schreier retraction),
such that,
\begin{enumerate}
\item[(S1)] $ k q + s p = 1_ A$, and
\item[(S2)] $q (k(x) + s(b)) = x$, for  every $x \in X$ and $ b \in B$.
\end{enumerate}

\vspace*{1mm}
To the Schreier split epimorphism above corresponds an action
$$\varphi \colon B \to \End(X)$$
defined by $\varphi(b)(x) = q(s(b) + k (x))$ that we will denote by $b \cdot x$.

 Important consequences (\cite{BMMS13}), that will be used in the sequel, are the following:
\begin{enumerate}
\item[(C1)] $k(b\cdot x)+ s(b) = s(b) + k(x)$;
\item[(C2)] $q(a_1+a_2)=q(a_1)+q(sp(a_1)+kq(a_2))=q(a_1)+p(a_1)\cdot q(a_2)$, for all $a_1,a_2\in A$;
\item[(C3)] $A$ is isomorphic to the semi-direct product $X\rtimes_{\varphi}B$ with isomorphisms defined by $\alpha(a)=(q(a),p(a))$ and $\beta(x,b)=k(x)+s(b)$;
\item[(C4)] $p$ is the cokernel of $k$ and so, since the sequence is exact, we speak of Schreier split extensions.
\end{enumerate}

\vspace*{.5cm}

This definition can easily be extended to the category of preordered monoids by keeping $q$ a set-theoretical map and assuming that $k$, $p$ and $s$ are monotone homomorphisms.

In this section we are going to characterize  Schreier split extensions in $\Ord\Mon^{*}$. For that we use the isomorphism defined in Theorem \ref{thm: 2.11} and work in the category $\RNMono(\Mon)$. For simplicity, we assume that the objects in this category are inclusions and we denote the right normal submonoids of a monoid $M$ by $P_M$, since they are the positive cones of a compatible preorder in $M$.
% More precisely $P$ can be seen as a functor associating to each preordered monoid $A=(A,+,0,\leq)$ in $\Ord\Mon^{*}$ % its positive cone, $P_A$, so that $P_A$ is a right normal submonoid of $U_2(A)=(A,+,0)$.

\begin{definition}\label{def: Schreier split epis} A Schreier split epimorphism in $\RNMono(\Mon)$ is a diagram
\begin{equation}\label{eq: S-splitepi in RNMono}
\xymatrix{
P_X\ar[r]^{\bar{k}}\ar[d] & P_A \ar@<.5ex>[r]^{\bar{p}}\ar[d] & P_B \ar@<.5ex>[l]^{\bar{s}}\ar[d]\\
X\ar@<-.5ex>[r]_{k} & A \ar@<.5ex>[r]^{p}\ar@{-->}@<-.5ex>[l]_{q} & B \ar@<.5ex>[l]^{s}}
\end{equation}
in which the lower row is a Schreier split epimorphism in $\Mon$, and the upper row consists of right normal submonoids, the positive cones $P_X$, $P_A$, and  $P_B$, that make $X$, $A$, and $B$, objects in $\Ord\Mon^{*}$. The morphisms $\bar{k}$, $\bar{p}$, and $\bar{s}$, are the corresponding restrictions.
\end{definition}

We point out that we do not assume the monotonicity of $q$.
%, i.e. that the map $q$ restricts to a map $\bar{q}\colon{P_A\to P_X}$.

\vspace*{.5cm}

We will show that for every two objects $(X,P_X)$ and $(B,P_B)$ in $\RNMono(\Mon)$, there is an equivalence between Schreier split extensions of $(X,P_X)$ by $(B,P_B)$ and a certain kind of actions that we will call preordered actions for the purpose of this paper.

\begin{definition}\label{def: preordered action}
Let $(X,P_X)$ and $(B,P_B)$ be two objects in the category $\RNMono(\Mon)$. A \emph{preordered action} of $(B,P_B)$ on $(X,P_X)$, that will be denoted by $(X,B,P_X,P_B,\varphi,\xi)$, consists of a monoid action of the underlying monoids $B$ on $X$, i.e. a monoid homomorphism \[\varphi\colon{B\to\End(X)},\]
together with a set-theoretical mapping
\[\xi\colon{X\times P_B \to X},\]
satisfying the following conditions:
\begin{enumerate}
\item[(A1)] $\xi(0,b)=0$, for all $b\in P_B$
\item[(A2)] if $x\in P_X$ then $\xi(x,0)=x$
\item[(A3)] if $\xi(x,b)=x$ and $\xi(x',b')=x'$ then \[\xi(x+b\cdot x',b+b')=x+b\cdot x'\]
\item[(A4)] for all $x,u\in X$, $v\in P_B$, $b\in B$, if $\xi(u,v)=u$, then there exists $u'\in X$ such that \[x+b\cdot u=u'+v'\cdot x\] and \[\xi(u',v')=u'\]
where $v'\in P_B$ is such that $b+v=v'+b$, which exists because $P_B$ is right normal.
\end{enumerate}
%It will be denoted as a six-tuple.
\end{definition}

A morphism $(f_0,f_1,f_2)$ between two Schreier split extensions in the category $\RNMono(\Mon)$ is a commutative diagram of the form
\[\xymatrix{
&P_X\ar[dl]_{\bar{f_0}}\ar[rr]^{\bar{k}}\ar[dd] && P_A \ar[dl]_{\bar{f_1}}\ar@<.5ex>[rr]^{\bar{p}}\ar[dd] && P_B \ar[dl]_{\bar{f_2}}\ar@<.5ex>[ll]^{\bar{s}}\ar[dd]\\
P_{X'}\ar[rr]^(.7){\bar{k'}}\ar[dd] && P_{A'} \ar@<.5ex>[rr]^(.7){\bar{p'}}\ar[dd] && P_{B'} \ar@<.5ex>[ll]^(.3){\bar{s'}}\ar[dd]\\
& X\ar[dl]_{f_0}\ar@<-.5ex>[rr]_(.3){k} && A\ar[ld]_{f_1} \ar@<.5ex>[rr]^(.3){p}\ar@{-->}@<-.5ex>[ll]_(.7){q} && B \ar[ld]_{f_2} \ar@<.5ex>[ll]^(.7){s}\\
 X'\ar@<-.5ex>[rr]_{k'} && A' \ar@<.5ex>[rr]^{p'}\ar@{-->}@<-.5ex>[ll]_{q'} && B' \ar@<.5ex>[ll]^{s'}.}
 \]

Whereas a morphism of preordered actions,
$$(f_0,f_2)\colon{(X,B,P_X,P_B,\varphi,\xi) \to (X',B',P_X',P_B',\varphi',\xi')}$$ consists of two monoid homomorphisms $f_0\colon{X\to X'}$ and $f_2\colon{B\to B'}$ which  restrict to the respective positive cones giving  $\bar{f_0}\colon{P_X\to P_{X'}}$ and $\bar{f_2}\colon{P_B\to P_{B'}}$, such that $$f_0(b\cdot x)=f_2(b)\cdot f_0(x)$$ and \[\xi'(f_0(u),\bar{f_2}(v))=f_0(u),\]
whenever $\xi(u,v)=u$.
%\begin{eqnarray*}
%f_0(b\cdot x)=f_2(b)\cdot f_0(x)\\
%\xi'(f_0(u),\bar{f_2}(v))=f_0()u, \text{whenever } \xi(u,v)=u
%\end{eqnarray*}
In other words, the diagram where the horizontal arrows are defined by the monoid actions, $(b,x) \mapsto b\cdot x$,
\[\xymatrix{B\times X\ar[d]_{f_0\times f_2}\ar[r]^{} & X\ar[d]^{f_0}\\B'\times X'\ar[r]^{} & X'}\]
is commutative and the diagram
\[\xymatrix{ X\times P_B\ar[d]_{f_0\times \bar{f_2}}\ar[r]^{\xi} & X\ar[d]^{f_0}\\ X'\times P_{B'}\ar[r]^{\xi'} & X'}\]
commutes only when restricted to those pairs $(u,v)\in X\times P_B$ for which $\xi(u,v)=u$. That is, there exists $g\colon{P_{\xi}\to P_{\xi'}}$, such that the left square and the outer rectangle commute
\begin{equation}\label{diag:g}
\xymatrix{P_{\xi}\ar[r] \ar@{-->}[d]_{g} & X\times P_B\ar[d]_{f_0\times \bar{f_2}}\ar[r]^{\xi} & X\ar[d]^{f_0}\\P_{\xi'}\ar[r] & X'\times P_{B'}\ar[r]^{\xi'} & X'}
\end{equation}
where $P_{\xi}=\{(u,v)\in X\times P_B\mid \xi(u,v)=u\}$ and similarly for $P_{\xi'}$.

This way we defined a category $\S$ of Schreier split extensions in $\RNMono(\Mon)$ and a category $\A$ of preordered actions.

\begin{theorem}
There is an equivalence of categories between the category $\A$ of preordered actions and the category $\S$ of Schreier split extensions in $\RNMono(\Mon)$.
\end{theorem}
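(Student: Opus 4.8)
The plan is to promote the classical equivalence of \cite{BMMS13} between Schreier split epimorphisms of monoids and monoid actions to the preordered setting, treating the positive cones as the only genuinely new data. I would construct two functors $\Phi\colon\S\to\A$ and $\Psi\colon\A\to\S$ and show that they are quasi-inverse. The organising observation is that, under the canonical isomorphism $A\cong X\rtimes_{\varphi}B$ of (C3), whose multiplication is $(x,b)+(u,v)=(x+b\cdot u,\,b+v)$ (obtained from (C1)), the positive cone $P_A$ is carried to a subset of $X\times P_B$, and conditions (A1)--(A4) are precisely the assertion that this subset is a right normal submonoid containing the images of $\bar s$ and $\bar k$.

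First I would define $\Phi$ on objects. From the lower Schreier split epimorphism one recovers the action $\varphi(b)(x)=q(s(b)+k(x))=b\cdot x$ as in \cite{BMMS13}. Since a morphism of $\A$ only ever tests the predicate $\xi(u,v)=u$, that is, membership in $P_{\xi}=\{(u,v)\mid \xi(u,v)=u\}$, it suffices to specify this set; I would take $\xi(u,v)=u$ when $k(u)+s(v)\in P_A$ and $\xi(u,v)=0$ otherwise, so that $P_{\xi}=\{(u,v)\in X\times P_B\mid k(u)+s(v)\in P_A\}$ (the stray value $0$ never coincides with a nonzero $u$ outside $P_{\xi}$, and for $u=0$ one always has $s(v)\in P_A$). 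Then (A1) and (A2) follow from $\bar s(P_B)\subseteq P_A$ and $\bar k(P_X)\subseteq P_A$; (A3) is closure of $P_A$ under addition read through the semidirect-product formula; and (A4) is exactly right normality of $P_A$: for $a=k(x)+s(b)$ and a positive element $k(u)+s(v)$, the $p'\in P_A$ with $a+(k(u)+s(v))=p'+a$ has the form $k(u')+s(v')$, and comparing $B$- and $X$-components yields $b+v=v'+b$ and $x+b\cdot u=u'+v'\cdot x$ with $(u',v')\in P_{\xi}$.

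Conversely, $\Psi$ sends a preordered action to the semidirect product $A=X\rtimes_{\varphi}B$ with the classical $k,p,q,s$, keeps $P_X$ and $P_B$, and sets $P_A=\{(u,v)\in X\times P_B\mid \xi(u,v)=u\}$. Here (A1)--(A3) make $P_A$ a submonoid (with $0\in P_A$ from (A1) at $b=0$) containing $\bar s(P_B)$ and $\bar k(P_X)$, while (A4) makes $P_A$ right normal; hence $(X,P_X)$, $(A,P_A)$, $(B,P_B)$ lie in $\RNMono(\Mon)$ and we obtain an object of $\S$. On morphisms I would use that a morphism of semidirect products is a pair $(f_0,f_2)$ with $f_0(b\cdot x)=f_2(b)\cdot f_0(x)$, the induced $f_1$ being $(u,v)\mapsto(f_0 u,f_2 v)$; the requirement $\bar f_1(P_A)\subseteq P_{A'}$ then says exactly that $\xi(u,v)=u$ implies $\xi'(f_0 u,\bar f_2 v)=f_0 u$, which is the defining condition for a morphism in $\A$. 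Functoriality of $\Phi$ and $\Psi$ is then immediate.

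Finally I would check the two composites. Via the isomorphism of (C3), $\Psi\Phi$ returns each Schreier split extension with its original cones, so $\Psi\Phi\cong 1_{\S}$. The composite $\Phi\Psi$ returns the same action $\varphi$ together with a map $\xi'$ satisfying $P_{\xi'}=P_{\xi}$ but possibly differing away from $P_{\xi}$; because every morphism of $\A$ depends on $\xi$ only through $P_{\xi}$, the identity pair $(1_X,1_B)$ is an isomorphism $(\varphi,\xi)\to(\varphi,\xi')$, natural in the action, so $\Phi\Psi\cong 1_{\A}$. This is precisely why the result is an equivalence rather than an isomorphism of categories. The step I expect to be most delicate is the matching of (A4) with the right normality of $P_A$: one must unwind the semidirect-product multiplication and track carefully which data ($v'$ produced by right normality of $P_B$, then $u'$) is quantified, while simultaneously verifying the harmless-but-essential fact that $\xi$ contributes only through the set $P_{\xi}$, so that morphisms and roundtrips behave coherently.
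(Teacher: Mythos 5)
Your proposal is correct and takes essentially the same route as the paper: your $\Phi$ and $\Psi$ coincide with the paper's functors $G$ and $H$ (same action $\varphi_b(x)=q(s(b)+k(x))$, same $\xi$ defined via membership of $k(u)+s(v)$ in $P_A$, same cone $P_A=P_{\xi}$ on the semidirect product), and your matching of (A1)--(A4) with the submonoid, restriction, and right-normality conditions is exactly the paper's verification. Your explicit remark that $\Phi\Psi$ recovers $\xi$ only up to the predicate set $P_{\xi}$, so that the comparison is a natural isomorphism rather than an equality, is a point the paper leaves implicit but is the same argument in substance.
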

\begin{proof}
We define a functor $G\colon{\S\to\A}$ assigning to a Schreier split epimorphism in $\RNMono(\Mon)$ as displayed in $(\ref{eq: S-splitepi in RNMono})$, a preordered action as follows:
\begin{enumerate}
\item $\varphi_b(x)=q(s(b)+k(x))$, for all $x\in X$ and $b\in B$;
\item $\xi(u,v)=u$ if $k(u)+s(v)\in P_A$ and $\xi(u,v)=0$ otherwise.
\end{enumerate}

 These maps $\varphi$ and $\xi$ satisfy the conditions of Definition \ref{def: preordered action}:
\begin{enumerate}
\item[$\bullet$] the first condition above defines an action of $B$ on $X$ (\cite{MMS13}).

\item[$\bullet$] $\xi(0,b)=0$

\item[$\bullet$] $\xi(x,0)=x$ since $k(x)+s(0)=k(x)\in P_A$
\item[$\bullet$] If $\xi(x,b)=x$ and $\xi(x',b')=x'$ then $k(x)+s(b),k(x')+s(b')\in P_A$. Since $P_A$ is a monoid then $$k(x)+s(b)+k(x')+s(b')\in P_A,$$ but $s(b)+k(x')=k(b\cdot x')+s(b)$ and so we have that  $$k(x+b\cdot x')+s(b+b')\in P_A.$$ Consequently, $\xi(x+b\cdot x',b+b')=x+b\cdot x'$.

\item[$\bullet$] $P_A\to A\cong X\rtimes_{\varphi} B$ right normal means that for all $(x,b)\in X\rtimes_{\varphi} B$, $(u,v)\in P_A$,  there exists $(u',v')\in P_A$ such that \[(x,b)+(u,v)=(u',v')+(x,b)\]
that is
\[(x+b\cdot u,b+v)=(u'+v'\cdot x, v'+b)\]
which implies $x+b\cdot u=u'+v'\cdot x$ and $b+v=v'+b$.
\end{enumerate}

Defining   $G(f_0,f_1,f_2)=(f_0,f_2)$ we obtain a functor $G\colon{\S\to\A}$.

\vspace*{0.1cm}
Conversely, given a preordered action $(X,B,P_X,P_B,\varphi,\xi)$ we construct a Schreier split extension in $\RNMono(\Mon)$ as follows (using the same notation as in $(\ref{eq: S-splitepi in RNMono})$):
\begin{enumerate}
\item $A=X\rtimes_{\varphi} B$ is the semi-direct product of the underlying monoids induced by the monoid action $\varphi$.
 This means that $A$ is the set $X\times B$ with the monoid operation
\[(x,b)+(x',b')=(x+b\cdot x',b+b')\] and neutral element $(0,0)\in X\times B$;
\item the right normal submonoid of A, $P_A=P_{\xi}$, is defined by
\[(x,b)\in P_A\Leftrightarrow b\in P_B \text{ and } \xi(x,b)=x.\]
This gives a Schreier split extension in $\RNMono(\Mon)$. Indeed:
\begin{enumerate}
\item $P_{\xi}$ is a submonoid of $X\rtimes_{\varphi}B$ by (A3) and the fact that $P_B$ is a monoid.
\item The right normality of $P_A$ comes from (A4).
\item The morphism $\langle 1,0\rangle\colon{X\to A}$ restricts to $P_X\to P_A$ by (A2).
\item The morphism $\langle 0,1\rangle\colon{B\to A}$ restricts to $P_B\to P_A$ by (A1).
\end{enumerate}
\end{enumerate}

Moreover, we define a functor $H\colon{\A\to \S}$ assigning to each morphism of actions $$(f_0,f_2)\colon{(X,B,P_,P_B,\varphi,\xi)\to(X',B',P_{X'},P_{B'},\varphi',\xi')},$$
$H(f_0,f_2)=(f_0,f_1,f_2)$ where $f_1=g\colon{P_{\xi}\to P_{\xi'}}$  as in diagram $(\ref{diag:g})$.

\vspace{.5cm}

Then  $GH\cong 1_{\A}$: in the diagram
\[\xymatrix{
&P_X\ar[dr]_{}\ar[rr]^{\bar{k}}\ar[dd] && P_A \ar@<.5ex>[rr]^{\bar{p}}\ar[dd] && P_B \ar@{=}[dl]_{}\ar@<.5ex>[ll]^{\bar{s}}\ar[dd]\\
 && P_{\xi}\ar[ur]_{\bar{\beta}} \ar@<.5ex>[rr]^(.7){}\ar[dd] && P_{B} \ar@<.5ex>[ll]^(.3){}\ar[dd]\\
& X\ar[dr]_{}\ar@<-.5ex>[rr]_(.3){k} && A \ar@<.5ex>[rr]^(.3){p}\ar@{-->}@<-.5ex>[ll]_(.7){q} && B \ar@{=}[ld]_{} \ar@<.5ex>[ll]^(.7){s}\\
 && X\rtimes_{\varphi}B\ar[ru]_{\beta} \ar@<.5ex>[rr]^{} && B \ar@<.5ex>[ll]^{}}
 \]
since $\beta(x,b)=k(x)+s(b)$, by definition of $P_{\xi}$, we conclude that $\bar{\beta}\colon{P_{\xi}\to P_A}$ is an isomorphism.

 It is easy to check that also $HG\cong 1_{\S}$, thus giving the desired equivalence of categories.

\end{proof}

Finally, we point out two interesting particular cases:
\begin{enumerate}
\item[$\bullet$] When $q$ is a monotone map then it restricts to $\bar{q}\colon{P_A\to P_X}$ and $\xi$ is trivial, in the sense that $\xi(x,b)=x$ when $x\in P_X$ and $b\in P_B$ and it is zero otherwise. In this case, the upper row of the diagram (\ref{eq: S-splitepi in RNMono}) is a Schreier split epimorphism of monoids and hence $P_A$ is isomorphic to the semidirect product $P_X\times_{\bar{\varphi}} P_B$.

\item[$\bullet$] When $q$ is an homomorphism then the monoid action  $\varphi$ is trivial, i.e. $\varphi_b(x)=x$, for all $b\in B$. However, we may still have a non trivial $\xi$ in this case, as the following example shows.
\end{enumerate}

In the diagram $(\ref{eq: S-splitepi in RNMono})$ if $q$ is a monoid homomorphism then $A\cong X\times B$ but the upper row need not be a Schreier split epimorphism.

\begin{example}

Let us consider the following diagram
\begin{equation}\label{eq: example}
\xymatrix{
\{0\}\ar[r]^{}\ar[d] & \N\times \N \ar@<.5ex>[r]^{+}\ar[d] & \N \ar@<.5ex>[l]^{\langle 0,1\rangle}\ar[d]\\
\mathbb{Z} \ar@<-.5ex>[r]_{\langle 1,-1\rangle} & \mathbb{Z}\times \mathbb{Z} \ar@<.5ex>[r]^{+}\ar@{-->}@<-.5ex>[l]_{\pi_1} & \mathbb{Z}, \ar@<.5ex>[l]^{\langle 0,1\rangle}}
\end{equation}
which is an example of a Schreier split epimorphism in the category $\RNMono(\Mon)$. The left $\mathbb{Z}$ has the discrete order because its positive positive cone is $\{0\}$, while the one on the right has the usual order since its positive cone is $\N$. The positive cone $\N\times\N$ and the corresponding order in $\mathbb{Z}\times \mathbb{Z}$ will be described below.

In this case we have a non trivial $\xi\colon{\mathbb{Z}\times \N \to \mathbb{Z}}$, defined by
\[
 \xi(u,v)= \left\{
 \begin{array}{rl}
 u & \mbox{if $u\in \N$ and $u\leq v$}\\
 0 & \mbox{otherwise}
 \end{array}
   \right.
 \]
giving a preordered action $(\mathbb{Z},\mathbb{Z},\{0\},\N,\varphi,\xi)$ where $\varphi$ is trivial,  which induces a Schreier split extension in $\RNMono(\Mon)$
\begin{equation}\label{eq: example_a}
\xymatrix{
\{0\}\ar[r]^{}\ar[d] & P_{\xi} \ar@<.5ex>[r]^{}\ar[d] & \N \ar@<.5ex>[l]^{}\ar[d]\\
\mathbb{Z} \ar@<-.5ex>[r]_{\langle 1,0\rangle} & \mathbb{Z}\times \mathbb{Z} \ar@<.5ex>[r]^{\pi_2}\ar@{-->}@<-.5ex>[l]_{\pi_1} & \mathbb{Z} \ar@<.5ex>[l]^{\langle 0,1\rangle}}
\end{equation}
where $P_{\xi}=\{(u,v)\in \mathbb{Z}\times \mathbb{Z}\mid 0\leq u\leq v\}$, with $0\leq u\leq v$ in the usual order of $\N$. This defines the positive cone $P = P_ {\xi}$ and the order of $\mathbb{Z}\times \mathbb{Z}$ in $(\ref{eq: example})$.
\end{example}

%\section*{Acknowledgements}

%Thanks are due to G. Janelidze, A. Montoli and M. Sobral for their helpful comments on the development of this work.

%This work is supported by Funda\c c\~ ao para a Ci\^ encia e a Tecnologia (FCT) and Centro2020 through the Projects: UID/Multi/04044/2013; PAMI - ROTEIRO/0328/2013 (N. 022158); Next.parts (17963); and also by CDRSP and ESTG from the Polytechnic Institute of Leiria.

%This work is supported by the Funda\c{c}\~{a}o para a Ci\^{e}ncia e a Tecnologia (FCT) and Centro2020 through the Project references: UID/Multi /04044/2019; PAMI - ROTEIRO/0328/2013 (Nº 022158); Next.parts (17963); and also by CDRSP and ESTG from the Polytechnic Institute of Leiria.

%This work is supported by the Funda\c{c}\~{a}o para a Ci\^{e}ncia e a Tecnologia (FCT) and Centro2020 through the following Projects: UIDB/04044/2020, UIDP/04044/2020, PAMI - ROTEIRO/0328/2013 %(Nº 022158), Next.parts (17963); and MATIS (CENTRO-01-0145-FEDER-000014 - 3362).

%The second author acknowledges the support of the Centre for Mathematics of
%the University of Coimbra -- UID/MAT/00324/2019.

%\texttt{Insert also CMUC...}

%\begin{acknowledgements}
%If you'd like to thank anyone, place your comments here
%and remove the percent signs.
%\end{acknowledgements}

% BibTeX users please use one of
%\bibliographystyle{spbasic}      % basic style, author-year citations
%\bibliographystyle{spmpsci}      % mathematics and physical sciences
%\bibliographystyle{spphys}       % APS-like style for physics
%\bibliography{}   % name your BibTeX data base

\begin{thebibliography}{99}

\bibitem{AR} J. Ad\'{a}mek and J. Rosick\'y,  \emph{Locally presentable and accessible categories}, Cambridge Univ. Press 1994.

\bibitem{BMMS13} D. Bourn, N. Martins-Ferreira, A. Montoli and M. Sobral, \emph{Schreier split epimorphisms in monoids and in semirings}, Textos de Matem\'{a}tica S\'{e}rie B \textbf{45} (Departamento de Matem\'{a}tica, Universidade de Coimbra (ISBN 978-972-8564-49-0)) (2013) vi+116pp.

\bibitem{BMMS14} D. Bourn, N. Martins-Ferreira, A. Montoli, M. Sobral, \emph{Schreier split epimorphisms between monoids}, Semigroup Forum \textbf{88} (2014) 739--752.

\bibitem{CMM19} M. M. Clementino, N. Martins-Ferreira and A. Montoli, \emph{On the categorical behaviour of preordered groups}, J. Pure Appl. Algebra \textbf{223} (2019) 4226--4245.

\bibitem{GMRS19} A. P. Garr\~{a}o, N. Martins-Ferreira, M. Raposo, M. Sobral, \emph{Cancellative conjugation semigroups and monoids}, Semigroup Forum \textbf{} (doi:10.1007/s00233-019-10070-9) (2019) 1--31.

\bibitem{Green} J. A. Green,  \emph{On the structure of semigroups}, Annals of Mathematics (second series) \textbf{54}(1) (1951) 163--172.

\bibitem{G74} P. A. Grillet, \emph{Left coset extensions}, Semigroup Forum \textbf{7} (1974) 200--263.

\bibitem{H98} P. H\'{a}jek, \emph{Metamathematics of Fuzzy Logic}, Kluwer Academic Publishors, Dordrecht, 1988.

\bibitem{HP99} K. Henckell and J. -E. Pin, \emph{Ordered monoids and J-trivial monoids},

\bibitem{JV19} J. Janda and T. Vetterlein, \emph{The coextension of commutative pomonoids and its application to triangular norms
}, Quaestiones Mathematicae \textbf{42}(3) (2019) 319--345.

\bibitem{K80} G. M. Kelly, \emph{A unified treatment of transfinite constructions for free algebras, free monoids, colimits, associated sheaves, and so on}, Bulletin of the Australian Mathematical Society \textbf{22}(1) (1980) 1--83.

\bibitem{L10} S. Lack, \emph{Note on the construction of free monoids}, Applied Categorical Structures \textbf{18} (2010) 17--29.

\bibitem{MacLane} S. Mac Lane, \emph{Categories for the Working Mathematician}, 2ed, Graduate Texts in Mathematics 5, Springer, 1998.

\bibitem{L82} J. Leech, \emph{Extending groups by monoids}, Journal of Algebra \textbf{74} (1982) 1--19.

\bibitem{L75} J. Leech, \emph{H-coextensions of monoids}, Mem. Amer. Math. Soc.  \textbf{157} (1975) 1--66.

\bibitem{MS04}  J. MacDonald and M. Sobral, \emph{Aspects of Monads} In M. Pedicchio \& W. Tholen (Eds.), \emph{Categorical Foundations: Special Topics in Order, Topology, Algebra, and Sheaf Theory} (Encyclopedia of Mathematics and its Applications: Cambridge University Press
pp. 213--268) 2004.


%\bibitem{NMF et all} N. Martins-Ferreira, A. Montoli, M. Sobral and A. Patchkoria, \emph{On the classification of Schreier extensions of monoids with non-abelian kernel}, to appear, Forum Mathematicum.

\bibitem{MMS13} N. Martins-Ferreira, A. Montoli and M. Sobral, \emph{Semidirect products and crossed modules in monoids with operations}, J. Pure Appl. Algebra  \textbf{217} (2) (2013)  334--347 .

\bibitem{P08} H. -E. Porst, \emph{On categories of monoids, comonoids and bimodules},
Quaestiones Mathematicae \textbf{31}(2) (2008) 127--139.

\bibitem{P98} A. Patchkoria, \emph{Crossed semimodules and Schreier internal categories in the category of monoids}, Georgian Mathematical Journal \textbf{5}(6) (1998) 575--581.

\bibitem{Pin95} J. -E. Pin, \emph{ A variety theorem without complementation}, Isevestiya VUZ Mathematika \textbf{39}(1995) 80--90. English version. Russian Mathem. Iz. VUZ \textbf{39} (1995) 74--83.


\bibitem{PW97} J. -E. Pin and P. Weil, \emph{Polynomial closure and unambiguous product}, Thery Comput. Systems \textbf{30}(1997) 1--39.

\bibitem{PW02} J. -E. Pin and P. Weil, \emph{Semidirect product of ordered monoids}, Commun.Algebra \textbf{30}(1)(2002) 149--169.

 \bibitem{RS97} G. Rozenberg and A. Salomaa (Ed.), \emph{Handbook of formal languages}, Vol \textbf{1} Springer - Verlag 1997.

\bibitem{S75} I. Simon, \emph{Piecewise testable events}, Proc. 2nd  GI Conf., Lect. Notes in Comp. Sci. \textbf{33}, Springer Verlag, Berlin, Heidelberg, New York (1975 ), 214--222.

\bibitem{ST85} H. Straubing and D. Th\'erien, \emph{Partially ordered finite monoids and a theorem of I. Simon}, J. of Algebra \textbf{119}, (1985), 393--399.

\bibitem{FW92} F. Wehrung, \emph{Injective positively ordered monoids I}, Journal of Pure and Appl. Algebra \textbf{83}(1)(1992) 43--82.






\end{thebibliography}

\end{document}